\tikzset{x=0.1em,y=0.1em}
\newtheoremstyle{mythm}
  {9pt}
  {9pt}
  {\slshape}
  {0pt}
  {\bfseries}
  {.}
  { }
  {\thmname{#1} \thmnumber{ #2}\thmnote{ (#3)}}
\theoremstyle{mythm} 
\newtheorem{theorem}{Theorem}[section]
\newtheorem{proposition}[theorem]{Proposition}
\theoremstyle{definition} 
\newtheorem{definition}[theorem]{Definition}
\newtheorem{remark}[theorem]{Remark}
\newtheorem{example}[theorem]{Example}
\newcommand{\alg}[1]{\mathcal{#1}}                
\newcommand{\abs}[1]{\left\lvert #1 \right\rvert}
\DeclareMathOperator{\id}{{\mathrm id}}                     
\DeclareMathOperator{\IE}{{\mathbf E}}                     
\DeclareMathOperator{\mass}{{\mathbf P}}                     
\newcommand{\IC}{\mathbf{C}}                     
\newcommand{\NC}{NC}  
\renewcommand{\phi}{\varphi}
\newcommand{\interw}{\tilde\cup}
\title[Free nested cumulants]{Free nested cumulants and an analogue of a formula of Brillinger}
\author{Franz Lehner}
\address{Institut f\"ur mathematische Strukturtheorie\\
Graz University of Technology, Steyrergasse 30, 8010 Graz}
\email{lehner@math.tugraz.at}
\keywords{Multivariate free cumulants, conditioned cumulants, Brillinger's formula}                                     
\subjclass[2000]{Primary: 46L54 ; Secondary: 05A18}
\begin{document}

\date{\today{}}

\maketitle{}

\begin{abstract}
  We prove a free analogue of Brillinger's formula (sometimes called
  ``law of total cumulance'') which expresses
  classical cumulants in terms of conditioned cumulants.
  As expected, the formula is obtained by replacing
  the lattice of set partitions by the lattice of noncrossing set partitions
  and using and an appropriate notion of noncommutative nested products.
  As an application we reprove a characterization of freeness due to
  Nica, Shlyakhtenko and Speicher by M\"obius inversion techniques,
  without recourse to the Fock space model for free random variables.
\end{abstract}

\section{Introduction and Definitions}
Cumulants 
describe the combinatorial aspects of independence.
Various notions of independence give rise to different kinds of cumulants,
see \cite{Lehner:2004:cumulants1} for a general approach.
In the present paper we concentrate on some aspects of
classical and free cumulants.

\subsection{Classical cumulants}
Classical cumulants can be introduced essentially in two different ways,
via the Fourier transform or via M\"obius inversion on the partition lattice.
For our purposes it will be convenient to use the latter approach.
Let us fix some notation first. 
Denote by~$\Pi_n$ the lattice of set partitions of order~$n$
with  refinement order.
For a partition~$\pi=\{\pi_1,\pi_2,\dots,\pi_p\}\in\Pi_n$
denote by~$\abs{\pi}=p$ its \emph{size}.
Let~$(\Omega,\alg{A},\mass)$ be a probability space with expectation
functional~$\IE$, 
then for a finite sequence
of random variables~$X_1$,~$X_2$, \dots,~$X_n$ on~$\Omega$
we define the partitioned moment functional by
$$
m_\pi(X_1,X_2,\dots,X_n) = \prod_j \IE \prod_{i\in \pi_j} X_i
$$
and the cumulants by
$$
\kappa_\pi(X_1,X_2,\dots,X_n)
 = \sum_{\substack{\sigma\in\Pi_n \\ \sigma\le\pi}}
    m_\sigma(X_1,X_2,\dots,X_n)
    \,\tilde{\mu}(\pi,\hat1_n)
$$
where $\tilde{\mu}$ is the M\"obius function on the partition lattice~$\Pi_n$
\cite{Stanley:2012:enumerative}.
Both $m_\pi$ and $\kappa_\pi$ are multilinear functionals.
For $\pi=\hat1_n$ we shall write $\kappa_n$ instead of $\kappa_{\hat1_n}$.
Then  $\kappa_\pi$ also factorizes along the blocks $\pi_j$ of $\pi$,
namely
$$
\kappa_\pi(X_1,X_2,\dots,X_n)
= \prod_j \kappa_{\abs{\pi_j}}(X_i:i\in\pi_j)
$$

The fundamental result of cumulant theory states that \emph{mixed cumulants vanish}.
That is,
if we can divide the random variables $X_1,X_2,\dots,X_n$ into two (nonempty) independent
groups then the cumulant $\kappa_n(X_1,X_2,\dots,X_n)$ vanishes.

An analogous construction can be done for  conditional expectations
with respect to a
sub-$\sigma$-algebra~$\alg{F}\subseteq\alg{A}$, 
by defining the partitioned conditional 
expectations to be the $\alg{F}$-measurable random variables
$$
\IE_\pi(X_1,X_2,\dots,X_n | \alg{F}) = \prod_j \IE (\prod_{i\in \pi_j} X_i | \alg{F} )
$$
and accordingly the \emph{conditioned cumulants} to be the $\alg{F}$-measurable 
random variables 
$$
\kappa_\pi(X_1,X_2,\dots,X_n | \alg{F})
 = \sum_{\sigma\leq\pi}
    \IE_\sigma(X_1,X_2,\dots,X_n | \alg{F})
    \,\tilde{\mu}(\pi,\hat1_n)
$$
The conditioned cumulants are again multiplicative on blocks and can be used to detect
conditional independence, namely if $X_1,X_2,\dots,X_n$ can be divided into two 
groups which are mutually independent conditionally on $\alg{F}$, then the 
cumulant~$\kappa_n(X_1,X_2,\dots,X_n)$ vanishes.

\subsection{Free cumulants}
In this section we review the
noncommutative analogues of the classical notions of independence and cumulants
from the point of view of Voiculescu's free probability.
\begin{definition}[{\cite{Voiculescu:1995:operations}}]
  Let $(\alg{A},\phi)$ be a noncommutative $\alg{B}$-valued probability space;
  i.e., $\alg{A}$ is a unital complex algebra, $\alg{B}\subseteq\alg{A}$
  is a unital subalgebra and $\phi:\alg{A}\to\alg{B}$
  is a conditional expectation.
  Subalgebras $\alg{A}_i$ which contain $\alg{B}$ are called
  \emph{free with amalgamation over $\alg{B}$}
  if
  $$
  \phi(X_1 X_2\dotsm  X_n)=0
  $$
  whenever $X_j\in \alg{A}_{i_j}$, $\phi(X_j)=0$ and $i_j\neq i_{j+1}$ $\forall j$.
  When $\alg{B}=\IC$, we recover the definition of \emph{freeness}.
\end{definition}
Freeness with amalgamation is a noncommutative analogue of conditional independence
known from classical probability theory.
The corresponding cumulants are due to Speicher 
\cite{Speicher:1994:multiplicative,Speicher:1998:combinatorial}.
Roughly speaking, free cumulants are defined by replacing the lattice
of all partitions in the definition of the classical cumulants
by the lattice of \emph{noncrossing partitions}.
See \cite[Prop.~4.17]{Lehner:2004:cumulants1} for an explanation why noncrossing
partitions appear.
\begin{definition}
  A partition $\pi$ is \emph{noncrossing} if there is no sequence
  $i<j<k<l$ s.t.\ $i\sim_\pi k$ and $j\sim_\pi l$ but $i\not\sim_\pi j$.
  The noncrossing partitions of order $n$
  form a lattice which we denote by $\NC_n$.
\end{definition}
Equivalently, noncrossing partitions can also be characterized recursively
by the property
that there is at least one block which is an interval
and after removing such a block the remaining partition is still non-crossing.
This property will be used in the definitions below.

In the rest of this paper, we use standard poset notation, 
cf.~\cite{Stanley:2012:enumerative}.
The \emph{$\zeta$-function} denotes the order indicator function
$$
\zeta(\pi,\rho) =
\begin{cases}
  1 &\pi\leq \rho\\
  0 &\pi \not\leq \rho
\end{cases}
$$
while by $\mu(\pi,\sigma)$  we will denote the M\"obius
function on the lattice of noncrossing partitions,
i.e., the unique function 
satisfying for every $\pi\leq\sigma$ the identity
$$
\sum_{\pi\leq\rho\leq\sigma}
\zeta(\pi,\rho)\,\mu(\rho,\sigma)=\delta(\pi,\sigma)
.
$$

\begin{definition}[{\cite{Speicher:1998:combinatorial}}]
  Define partitioned moment functionals recursively as follows.
  For a noncrossing partition $\pi\in\NC_n$, let $\pi_j=\{k,k+1,\dots,l\}$
  be an interval block, then
  $$
  \phi_\pi(X_1,X_2,\dots,X_n)
  = \phi_{\pi\setminus\{\pi_j\}}(X_1,
                                 X_2,
                                 \dots,
                                 X_{k-1},
                                 \phi(X_k X_{k+1} \dotsm X_l) X_{l+1},
                                 \dots,
                                 X_n)
  $$
  The \emph{free \emph{or} noncrossing cumulants} are defined by M\"obius inversion on $\NC_n$:
  $$
  C_\pi^\phi(X_1,X_2,\dots,X_n)
  = \sum_{\sigma\leq\pi}
     \phi_\sigma(X_1,X_2,\dots,X_n)
     \, \mu(\sigma,\pi)
  $$
  We will also write $C_n^\phi$ for $C_{\hat1_n}^\phi$ and it follows that
  the cumulants are also multiplicative on blocks, that is,
  if $\pi_j=\{k,k+1,\dots,l\}$ is an interval block of $\pi$ of length $m$,
  then
  $$
  C_\pi^\phi(X_1,X_2,\dots,X_n)
  = C_{\pi\setminus\{\pi_j\}}^\phi(X_1,
                                   X_2,
                                   \dots,
                                   X_{k-1},
                                   C_m^\phi(X_k X_{k+1} \dotsm X_l) X_{l+1},
                                   \dots,
                                   X_n)
  $$
  Moreover, the $\alg{B}$-\emph{module property} holds for expectations 
  \begin{align*}
    \phi_\pi(bX_1,\dots,X_nb') &= b\, \phi_\pi(X_1,\dots,X_n)\,b'\\
    \phi_\pi(X_1,\dots,X_{k-1},bX_k,\dots,X_n) &=
       \phi_\pi(X_1,\dots,X_{k-1}b,X_k,\dots,X_n) 
  \end{align*}
  for all $b,b'\in\alg{B}$, 
  as well as for cumulants: 
  \begin{align*}
    C^\phi_\pi(bX_1,\dots,X_nb') &= b\, C^\phi_\pi(X_1,\dots,X_n)\,b'\\
    C^\phi_\pi(X_1,\dots,X_{k-1},bX_k,\dots,X_n) &=
       C^\phi_\pi(X_1,\dots,X_{k-1}b,X_k,\dots,X_n)
  \end{align*}
\end{definition}
Note that for $\alg{B}=\IC$ this simply means that
$$
C_\pi^\phi(X_1,X_2,\dots,X_n)
= \prod_j C_{\abs{\pi_j}}(X_i : i\in\pi_j)
.
$$

The starting point of this paper is the following formula for classical cumulants,  
due to Brillinger \cite{Brillinger:1969:calculation}:
\begin{equation}
  \label{eq:Brillinger}
  \kappa_n(X_1,X_2,\dots,X_n)
  = \sum_{\pi\in\Pi_n}
     \kappa_{\abs{\pi}}( \kappa_{\abs{\pi_j}}(X_i : i\in \pi_j | \alg{B})
                                        : j=1,\dots,\abs{\pi})
\end{equation}
where for a partition~$\pi=\{B_1,B_2,\dots,B_p\}\in\Pi_n$ we
denote by~$\abs{\pi}=p$ its \emph{size}.

We establish an analogue of this formula
for free cumulants by adapting a lattice theoretical proof due to Speed
\cite{Speed:1983:cumulantsI}.
Noncommutativity prevents a direct generalization of \eqref{eq:Brillinger},
therefore we propose \emph{nested cumulants} as a replacement for ``cumulants of cumulants''.
To illustrate this issue we first consider cumulants of products from an abstract point
of view.

\section{Cumulants of nested products}
\label{sec:nested}
We want to define cumulants of products, where the products are not taken in
linear order. To do this, we first give a definition and then discuss its
connection
to cumulants of products.
\begin{definition}
  Let $\rho\leq\sigma$ be two noncrossing partitions of order $n$
  and $X_1$, $X_2$,\ldots{}, $X_n$ be noncommutative random variables.
  Then we define the partial cumulant
  $$
  C_{\rho,\sigma}(X_1,X_2,\dots,X_n)
  = \sum_{\rho\leq \pi\leq \sigma}
    \phi_\pi(X_1,X_2,\dots,X_n)
    \mu(\pi,\sigma)
  .
  $$
\end{definition}
Note that in particular for $\rho=\hat{0}_n$ we obtain
the usual cumulant $C_{\hat{0},\sigma}=C_{\sigma}$, 
while for  $\rho=\sigma$ we get the moment
$C_{\sigma,\sigma}=\phi_{\sigma}$.
For intermediate partitions we get a generalization of cumulants of products.

\begin{definition}
  Let $\rho =\{\rho_1,\rho_2,\dots,\rho_r\}$
  and $\sigma=\{\sigma_1,\sigma_2,\dots,\sigma_s\}$
  be two set partitions such that $\rho\leq\sigma$.
  Here  the blocks are numbered according to their minimal elements.
  Then every block of $\rho$ is contained in some block
  of $\sigma$ and by collapsing the blocks of $\rho$ we
  can define
  $\sigma/\rho=\{\hat{\sigma}_1,\dots,\hat{\sigma}_s\}$
  to be the unique partition of the set $\{1,2,\dots,r\}$
  such that
  $\sigma_i = \bigcup_{j\in\hat{\sigma}_i} \rho_j$ for every $i$.
\end{definition}

\begin{remark}
\label{rem:intervalpartition}
When  $\rho$ is an interval partition,
say $\rho =\{\rho_1,\rho_2,\dots,\rho_r\}$,
where $\rho_1=\{1,2,\dots,n_1\}$, $\rho_2=\{n_1+1,2,\dots,n_2\}$, \ldots,
$\rho_r=\{n_{r-1}+1,2,\dots,n_r=n\}$,
and $\sigma$ is noncrossing,
then $\sigma/\rho$ is noncrossing as well and the partial cumulant
coincides with the cumulant of the products
$$
C_{\rho,\sigma}(X_1,X_2,\dots,X_n)
= C_{\sigma/\rho}(X_1X_2\dotsm X_{n_1}, 
                 X_{n_1+1}\dotsm X_{n_2},
                 \dots,
                 X_{n_{r-1}+1}\dotsm X_{n})
.
$$
\end{remark}

There is a formula for cumulants of products in terms of simple cumulants,
which is due to Leonov and Shiryaev in the classical case
\cite{LeonovShiryaev:1959:method}
and to Speicher and Krawchyk in the free case \cite{KrawczykSpeicher:2000:combinatorics}.
It immediately generalizes to the partial cumulants
(cf.~\cite[Prop.~10.11]{NicaSpeicher:2006:lectures}).
\begin{proposition}
  For partitions $\rho\leq\sigma$ we have
  $$
  C_{\rho,\sigma}(X_1,X_2,\dots,X_n)
  = \sum_{\substack{\tau\\ \tau\vee \rho = \sigma}} 
    C_\tau(X_1,X_2,\dots,X_n)
  $$
\end{proposition}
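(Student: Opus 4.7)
The plan is to proceed by standard Möbius inversion on the noncrossing partition lattice, mimicking the classical Leonov--Shiryaev argument as done in Nica--Speicher for the case $\rho = \hat 0_n$.

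First, I invert the definition of the free cumulants. Since $C_\pi^\phi = \sum_{\sigma \le \pi} \phi_\sigma\,\mu(\sigma,\pi)$, the usual Möbius inversion on $\NC_n$ yields
\[
\phi_\pi(X_1,\dots,X_n) = \sum_{\tau \le \pi} C_\tau^\phi(X_1,\dots,X_n).
\]
I would then substitute this expression into the definition of $C_{\rho,\sigma}$ and interchange the order of summation:
\[
C_{\rho,\sigma}(X_1,\dots,X_n)
= \sum_{\rho \le \pi \le \sigma} \Bigl(\sum_{\tau \le \pi} C_\tau\Bigr)\,\mu(\pi,\sigma)
= \sum_\tau C_\tau \sum_{\substack{\rho \le \pi \le \sigma \\ \tau \le \pi}} \mu(\pi,\sigma).
\]

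Next I reorganize the inner sum. The two constraints $\pi \ge \rho$ and $\pi \ge \tau$ are equivalent to the single constraint $\pi \ge \rho \vee \tau$ (join taken in $\NC_n$), so the inner sum becomes $\sum_{\rho \vee \tau \le \pi \le \sigma} \mu(\pi,\sigma)$. By the defining identity of the Möbius function recorded earlier in the excerpt, this equals $\delta(\rho \vee \tau, \sigma)$; in particular it is automatically $0$ whenever $\rho \vee \tau \not\le \sigma$, so no case analysis is needed. Substituting back collapses the sum over $\tau$ to exactly those $\tau$ with $\tau \vee \rho = \sigma$ (note that this condition implies $\tau \le \sigma$), giving the claimed formula.

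There is no real obstacle here: the argument is three lines of symbol manipulation once the Möbius inverse of $C_\pi$ is written down. The only point that deserves a brief mention is that the join $\rho \vee \tau$ is taken in the lattice $\NC_n$, not $\Pi_n$; but since all partitions appearing in both the outer and inner sums are noncrossing and the defining identity of $\mu$ used above is the one for $\NC_n$, everything is internally consistent. No appeal to the recursive definition of $\phi_\pi$ or to the $\alg B$-module structure is required beyond what is already embedded in the existence of $C_\pi^\phi$ as a well-defined $\alg B$-valued multilinear functional.
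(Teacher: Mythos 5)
Your proof is correct and is essentially the paper's own argument: expand $\phi_\pi$ into cumulants via Möbius inversion, swap summations, combine the constraints $\pi\geq\rho$ and $\pi\geq\tau$ into $\pi\geq\rho\vee\tau$ (the join in $\NC_n$), and collapse the inner sum to $\delta(\rho\vee\tau,\sigma)$ by the defining property of $\mu$. The only cosmetic difference is that the paper phrases the same steps in terms of the $\zeta$-function, writing $\zeta(\tau,\pi)\,\zeta(\rho,\pi)=\zeta(\tau\vee\rho,\pi)$, rather than as explicit constraints on the summation range.
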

\begin{proof}
  \begin{align*}
    C_{\rho,\sigma}
    &= \sum_\pi \phi_\pi(X_1,X_2,\dots,X_n)
       \,\zeta(\rho,\pi)\,\mu(\pi,\sigma)
       \\
    &= \sum_\pi\sum_\tau  C_\tau(X_1,X_2,\dots,X_n)
       \,\zeta(\tau,\pi)
       \,\zeta(\rho,\pi)\,\mu(\pi,\sigma)
       \\
    &= \sum_\tau   C_\tau(X_1,X_2,\dots,X_n)
       \sum_\pi
       \,\zeta(\tau\vee \rho,\pi)
       \,\mu(\pi,\sigma)
       \\
    &= \sum_\tau   C_\tau(X_1,X_2,\dots,X_n)
       \,
       \delta(\tau\vee \rho,\sigma)
  \end{align*}
\end{proof}

\begin{remark}
  The procedure presented in this section
  can also be carried out for classical cumulants, i.e.,
  on the full partition lattice,
  however because of commutativity it simply leads to a rearrangement of
  cumulants of products, namely
  $$
  \kappa_{\rho,\sigma}(X_1,X_2,\dots,X_n)
  =  \kappa_{\sigma/\rho}(\prod_{i\in b} X_i : b \in \rho)
  $$
\end{remark}

\section{Conditioned free cumulants}

Suppose we are given algebras $\alg{C}\subseteq \alg{B}\subseteq\alg{A}$
and conditional expectations $\alg{A}\xrightarrow{\psi}\alg{B}\xrightarrow{\phi}\alg{C}$.
We identify $\phi$ with $\phi\circ\psi:\alg{A}\to\alg{C}$
and wish to express the $\alg{C}$-valued cumulants $C^\phi$ in terms of the 
$\alg{B}$-valued cumulants $C^\psi$.
The next definition is rather formal and should be read 
with the examples following it at hand.
\begin{definition}
  \label{def:condfree}
  We define a partitioned moment function $\phi$ of the partitioned cumulants
  $C_\pi^\psi$, namely for $\sigma\geq\pi$ we define 
  $\phi_\sigma\circ C_\pi^\psi(X_1,X_2,\dots,X_n)$ recursively as follows.
  Let $\sigma_j=\{k+1,\dots,l\}$ be an interval block of $\sigma$
  and $\pi|_{\sigma_j}=\{\pi_{i_1},\pi_{i_2},\dots,\pi_{i_m}\}$ the
  blocks of $\pi$ which are contained in $\sigma_j$,
  then we put
  \begin{multline*}
  \phi_\sigma\circ \psi_\pi(X_1,X_2,\dots,X_n)\\
  = \phi_{\sigma\setminus\{\sigma_j\}}\circ \psi_{\pi\setminus \pi|_{\sigma_j} }
      (X_1,
       X_2,
       \dots,
       X_k,
       \phi( \psi_{\pi|_{\sigma_j}}(X_{k+1},\dots,X_l)) X_{l+1},
       X_{l+2},
       \dots, 
       X_n
       )
  \end{multline*}
  and
  $$
  \phi_\sigma\circ C_\pi^\psi(X_1,X_2,\dots,X_n)
  = \sum_{\tau\leq\pi}
     \phi_\sigma\circ\psi_\tau(X_1,X_2,\dots,X_n)\,\mu(\tau,\pi)
  .
  $$
  By multiplicativity we have
  \begin{multline*}
  \phi_\sigma\circ C_\pi^\psi(X_1,X_2,\dots,X_n)\\
  = \phi_{\sigma\setminus\{\sigma_j\}}\circ C^\psi_{\pi\setminus \pi|_{\sigma_j} }
      (X_1,
       X_2,
       \dots,
       X_k,
       \phi( C^\psi_{\pi|_{\sigma_j}}(X_{k+1},\dots,X_l)) X_{l+1},
       X_{l+2},
       \dots, 
       X_n
       )
    .
  \end{multline*}
  Moreover the M{\"o}bius inversion principle and the invariance
  $\phi=\phi\circ\psi$ imply a generalized moment-cumulant formula
  $$
  \phi_\sigma(X_1,X_2,\dots,X_n)
  = \sum_{\pi\leq\sigma} \phi_\sigma\circ C_\pi^\psi(X_1,X_2,\dots,X_n)
  .
  $$
  Now we apply the cumulant construction in each block of $\sigma$
  to define ``cumulants of cumulants'' or \emph{nested cumulants}:
  $$
  C_\sigma^\phi\circ C_\pi^\psi(X_1,X_2,\dots,X_n)
  = \sum_{\pi\leq\rho\leq\sigma} \phi_\rho\circ C_\pi^\psi(X_1,X_2,\dots,X_n)\,\mu(\rho,\sigma)
  \,.
  $$
  
  In total this means that
  $$
  C^\phi_\sigma\circ C^\psi_\pi(X_1,X_2,\dots,X_n)
  =\sum_{\tau\leq\pi} \sum_{\pi\leq\rho\leq\sigma}
    \phi_\rho\circ\psi_{\tau}(X_1,X_2,\dots,X_n)
    \,\mu(\rho,\sigma)
    \,\mu(\tau,\pi)
  $$
  This function is multiplicative on the blocks and we have by M\"obius inversion
  $$
  \phi_\sigma\circ C_\pi^\psi(X_1,X_2,\dots,X_n)
  = \sum_{\pi\leq\rho\leq\sigma}
     C_\rho^\phi\circ C_\pi^\psi(X_1,X_2,\dots,X_n)
  $$
\end{definition}

\begin{example}
\label{ex:1}
Again, if $\rho$ is an interval partition as in 
Remark~\ref{rem:intervalpartition}
then we get the analogous formula
\begin{multline}
  \label{eq:CsigmaoCrhointerval}
C_\sigma^\phi\circ C_\rho^\psi(X_1,X_2,\dots,X_n)
\\
= C_{\sigma/\rho}^\phi(C_{n_1}(X_1,X_2,\dots, X_{n_1}), 
                  C_{n_2-n_1}(X_{n_1+1},\dots, X_{n_2}),
                  \dots,
                  C_{n_r-n_{r-1}}(X_{n_{r-1}+1}\dotsm X_{n}))
.
\end{multline}
\end{example}
\begin{example}
\label{ex:2}  
If $\rho$ is not an interval partition
then the nested cumulant becomes more complicated.
As an example consider $\pi=
\begin{tikzpicture}
  \draw (2,0)--(2,7.5);
  \draw (8,0)--(8,7.5);
  \draw (14,0)--(14,4.5);
  \draw (20,0)--(20,4.5);
  \draw (26,0)--(26,4.5);
  \draw (32,0)--(32,4.5);
  \draw (38,0)--(38,7.5);
  \draw (44,0)--(44,7.5);
  \draw (14,4.5)--(20,4.5);
  \draw (26,4.5)--(32,4.5);
  \draw (2,7.5)--(44,7.5);
\end{tikzpicture}$ and $\sigma=
\begin{tikzpicture}
  \draw (2,0)--(2,7.5);
  \draw (8,0)--(8,7.5);
  \draw (14,0)--(14,4.5);
  \draw (20,0)--(20,4.5);
  \draw (26,0)--(26,4.5);
  \draw (32,0)--(32,4.5);
  \draw (38,0)--(38,7.5);
  \draw (44,0)--(44,7.5);
  \draw (14,4.5)--(32,4.5);
  \draw (2,7.5)--(44,7.5);
\end{tikzpicture}$,
then
\begin{align*}
  \psi_\pi(X_1,X_2,\dots,X_8) 
  &= \psi(X_1 X_2\, \psi(X_3 X_4)\, \psi(X_5 X_6)\, X_7 X_8)
  \\
  \phi_\sigma\circ\psi_\pi(X_1,X_2,\dots,X_8)
  &= \phi(\psi(X_1 X_2\, \phi(\psi(X_3 X_4)\, \psi(X_5 X_6))\, X_7 X_8))
  \\
  \phi_\sigma\circ C^\psi_\pi(X_1,X_2,\dots,X_8)
  &= \phi(C_4^\psi(X_1, X_2, \phi(C_2^\psi(X_3, X_4)\,C_2^\psi(X_5, X_6))\, X_7, X_8))
  \\
  C^\phi_\sigma\circ C^\psi_\pi(X_1,X_2,\dots,X_8)
  &= \phi(C_4^\psi(X_1, X_2, C_2^\phi(C_2^\psi(X_3, X_4),C_2^\psi(X_5, X_6))\, X_7, X_8))
\end{align*}

\end{example}
\begin{example}
  \label{ex:3}
  The previous examples might give the impression that the conditioned 
  cumulants can always be expressed in terms of the $\psi$-cumulants.
  Here is a nontrivial example which shows that this is not the case.
  \begin{align*}
    C_3^\phi\circ C_{
\begin{picture}(14,6.5)(1,0)
  \put(2,0){\line(0,1){7.5}}
  \put(8,0){\line(0,1){4.5}}
  \put(14,0){\line(0,1){7.5}}
  \put(8,4.5){\line(1,0){0}}
  \put(2,7.5){\line(1,0){12}}
\end{picture}
}(X_1, X_2, X_3)
  &= \phi_{
\begin{picture}(14,6.5)(1,0)
  \put(2,0){\line(0,1){7.5}}
  \put(8,0){\line(0,1){4.5}}
  \put(14,0){\line(0,1){7.5}}
  \put(8,4.5){\line(1,0){0}}
  \put(2,7.5){\line(1,0){12}}
\end{picture}
}\circ C^\psi_{
\begin{picture}(14,6.5)(1,0)
  \put(2,0){\line(0,1){7.5}}
  \put(8,0){\line(0,1){4.5}}
  \put(14,0){\line(0,1){7.5}}
  \put(8,4.5){\line(1,0){0}}
  \put(2,7.5){\line(1,0){12}}
\end{picture}
}(X_1,X_2,X_3) \mu(
\begin{picture}(14,6.5)(1,0)
  \put(2,0){\line(0,1){7.5}}
  \put(8,0){\line(0,1){4.5}}
  \put(14,0){\line(0,1){7.5}}
  \put(8,4.5){\line(1,0){0}}
  \put(2,7.5){\line(1,0){12}}
\end{picture}
,
\begin{picture}(14,3.5)(1,0)
  \put(2,0){\line(0,1){4.5}}
  \put(8,0){\line(0,1){4.5}}
  \put(14,0){\line(0,1){4.5}}
  \put(2,4.5){\line(1,0){12}}
\end{picture}
)
\\
& \qquad +
\phi_{
\begin{picture}(14,3.5)(1,0)
  \put(2,0){\line(0,1){4.5}}
  \put(8,0){\line(0,1){4.5}}
  \put(14,0){\line(0,1){4.5}}
  \put(2,4.5){\line(1,0){12}}
\end{picture}
}\circ
C_{
\begin{picture}(14,6.5)(1,0)
  \put(2,0){\line(0,1){7.5}}
  \put(8,0){\line(0,1){4.5}}
  \put(14,0){\line(0,1){7.5}}
  \put(8,4.5){\line(1,0){0}}
  \put(2,7.5){\line(1,0){12}}
\end{picture}
}(X_1,X_2,X_3)
\,
\mu(
\begin{picture}(14,3.5)(1,0)
  \put(2,0){\line(0,1){4.5}}
  \put(8,0){\line(0,1){4.5}}
  \put(14,0){\line(0,1){4.5}}
  \put(2,4.5){\line(1,0){12}}
\end{picture}
,
\begin{picture}(14,3.5)(1,0)
  \put(2,0){\line(0,1){4.5}}
  \put(8,0){\line(0,1){4.5}}
  \put(14,0){\line(0,1){4.5}}
  \put(2,4.5){\line(1,0){12}}
\end{picture}
)
\\
&= \phi( C^\psi_{
\begin{picture}(14,6.5)(1,0)
  \put(2,0){\line(0,1){7.5}}
  \put(8,0){\line(0,1){4.5}}
  \put(14,0){\line(0,1){7.5}}
  \put(8,4.5){\line(1,0){0}}
  \put(2,7.5){\line(1,0){12}}
\end{picture}
}(X_1, X_2, X_3))
- \phi_{
\begin{picture}(14,6.5)(1,0)
  \put(2,0){\line(0,1){7.5}}
  \put(8,0){\line(0,1){4.5}}
  \put(14,0){\line(0,1){7.5}}
  \put(8,4.5){\line(1,0){0}}
  \put(2,7.5){\line(1,0){12}}
\end{picture}
}(
C^\psi_{
\begin{picture}(14,6.5)(1,0)
  \put(2,0){\line(0,1){7.5}}
  \put(8,0){\line(0,1){4.5}}
  \put(14,0){\line(0,1){7.5}}
  \put(8,4.5){\line(1,0){0}}
  \put(2,7.5){\line(1,0){12}}
\end{picture}
}(X_1, X_2, X_3))
\\
&= \phi( C^\psi_2(X_1,\psi(X_2)X_3))
   - \phi(C^\psi_2(X_1,\phi(X_2)X_3))
\\
&= \phi( C^\psi_2(X_1, (\psi(X_2)-\phi(X_2))X_3))
  \end{align*}
\end{example}

\begin{example}
  Here is an example exhibiting some partial commutativity.
  Let $(\alg{A},\phi)$
  and $(\alg{B},\psi)$ be two noncommutative probability spaces.
  For the sake of simplicity assume that both $\phi$ and $\psi$ are
  $\IC$-valued  expectations.
  Consider the inclusions $\IC\subseteq \alg{B}\simeq I\otimes \alg{B}
  \subseteq \alg{A}\otimes\alg{B}$ and the corresponding expectations
  $\tilde\phi = \phi\otimes\id : \alg{A}\otimes\alg{B} \to \alg{B}$
  and $\psi:\alg{B}\to\IC$.
  Note that if $\alg{A}_i$ are free subalgebras of a noncommutative probability
  space,
  then   $\alg{A}_i\otimes\alg{B}$ are free with amalgamation over $\alg{B}$
  in $\alg{A}\otimes\alg{B}$.
  Then for any sequence of  simple tensors $a_1\otimes b_1$,  $a_2\otimes b_2$,
  \ldots{} $a_n\otimes b_n$
  the nested expectations and cumulants as defined above are
  \begin{align*}
    \psi_\sigma\circ \tilde{\phi}_\pi(a_1\otimes b_1, a_2\otimes b_2,\dots a_n\otimes b_n) 
       &= \phi_\sigma(a_1,a_2,\dots,a_n) \, \psi_\pi(b_1,b_2,\dots,b_n)\\
    \psi_\sigma\circ C^{\tilde{\phi}}_\pi(a_1\otimes b_1, a_2\otimes b_2,\dots a_n\otimes b_n) 
       &= \phi_\sigma(a_1,a_2,\dots,a_n) \, C^\psi_\pi(b_1,b_2,\dots,b_n)\\
    C^\psi_\sigma\circ C^{\tilde{\phi}}_\pi(a_1\otimes b_1, a_2\otimes b_2,\dots a_n\otimes b_n) 
       &= C^\phi_\sigma(a_1,a_2,\dots,a_n) \, C^\psi_\pi(b_1,b_2,\dots,b_n)\\
  \end{align*}
\end{example}

\begin{remark}
  Note that if we apply this definition with classical instead of free
  cumulants, the analogue of \eqref{eq:CsigmaoCrhointerval} holds for arbitrary
  partitions. Indeed, denote by $\IE^{\alg{F}}$ and $\kappa^{\alg{F}}$ the
  conditional expectations and cumulants with respect to a $\sigma$-subfield
  $\alg{F}$ of the given probability space. Then we define for a pair
  of set partitions $\sigma\geq \pi$ the partitioned expectations and cumulants
  as before, replacing noncrossing partitions by arbitrary partitions and obtain
  \begin{align*}
  \IE_\sigma\circ \IE^{\alg{F}}(X_1,X_2,\dots,X_n)
  &= \prod_{c\in\sigma} 
       \IE\prod_{\substack{b\in\pi \\ b\subseteq c}} \IE[\prod_{i\in b} X_i  | \alg{F}]\\
  \IE_\sigma\circ \kappa^{\alg{F}}(X_1,X_2,\dots,X_n)  
  &= \sum_{\tau\leq\pi} \IE_\sigma\circ\IE^{\alg{F}}_\tau(X_1,X_2,\dots,X_n)
  \,\mu(\tau,\pi)\\
  &= \prod _{c\in\sigma} 
       \IE\prod_{\substack{b\in\pi \\ b\subseteq c}} \kappa^{\alg{F}}(X_i:i\in b)\\
  \kappa_\sigma\circ \kappa^{\alg{F}}_\pi(X_1,X_2,\dots,X_n) 
  &= \kappa_{\sigma/\rho}(\kappa^{\alg{F}}_{\abs{b}}(X_i:i\in b): b\in \pi)
  \end{align*}
  where $\sigma/\rho$ is the partition obtained from $\sigma$ by collapsing
  each block of $\pi$ to a singleton as defined in section~\ref{sec:nested},
  which implies that the intervals
  $[\pi,\sigma]$ and $[\hat{0}_m,\sigma/\rho]$ are isomorphic as posets.

\end{remark}

Here is now the analogue of Brillinger's formula \eqref{eq:Brillinger} 
for free cumulants.
As expected, noncrossing partitions appear, but we also have to take
care of noncommutativity.
\begin{theorem}
  \label{thm:ConditionedCumulants}
  $$
  C_n^\phi(X_1,X_2,\dots,X_n)
  = \sum_{\sigma\in \NC_n}
     C_n^\phi\circ C_\sigma^\psi(X_1,X_2,\dots,X_n)
  $$
\end{theorem}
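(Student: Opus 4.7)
The plan is to reduce the theorem to a direct algebraic manipulation in the lattice $\NC_n$, using only the M\"obius inversion formulas that were packaged into the definition of the nested cumulants $C_\sigma^\phi\circ C_\pi^\psi$. Two facts will carry the whole argument: first, the ordinary moment-cumulant formula $C^\phi_{\hat 1_n}=\sum_{\rho\le\hat 1_n}\phi_\rho\,\mu(\rho,\hat 1_n)$, which is just the definition of the free cumulant at the top of the lattice; second, the generalized moment-cumulant formula $\phi_\rho=\sum_{\pi\le\rho}\phi_\rho\circ C^\psi_\pi$, which is the identity recorded in Definition~\ref{def:condfree} and which encodes the invariance $\phi=\phi\circ\psi$.

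First I would substitute the second identity into the first:
$$
C^\phi_{\hat 1_n}
= \sum_{\rho\le\hat 1_n}\mu(\rho,\hat 1_n)\sum_{\pi\le\rho}\phi_\rho\circ C^\psi_\pi.
$$
Next I would interchange the order of summation, using that the condition $\pi\le\rho\le\hat 1_n$ on the pair $(\pi,\rho)\in\NC_n\times\NC_n$ is symmetric in how it is iterated. This gives
$$
C^\phi_{\hat 1_n}
= \sum_{\pi\in\NC_n}\;\sum_{\pi\le\rho\le\hat 1_n}\phi_\rho\circ C^\psi_\pi\,\mu(\rho,\hat 1_n).
$$
Finally I would recognize the inner sum as the defining formula for the nested cumulant $C^\phi_{\hat 1_n}\circ C^\psi_\pi$ specialized to $\sigma=\hat 1_n$, producing the desired expression $\sum_{\pi\in\NC_n}C^\phi_n\circ C^\psi_\pi$.

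There is no genuine obstacle once Definition~\ref{def:condfree} is in place: the conceptual content of the theorem has been front-loaded into the two-step cumulant construction, so that the statement becomes a two-line Fubini-style swap on the noncrossing partition lattice. The only point requiring a little care is to confirm that the generalized moment-cumulant identity $\phi_\rho=\sum_{\pi\le\rho}\phi_\rho\circ C^\psi_\pi$ really follows from the M\"obius inversion applied blockwise on $\rho$, as asserted in Definition~\ref{def:condfree}; that in turn rests on the multiplicativity of $\phi_\sigma\circ C^\psi_\pi$ on the blocks of $\sigma$, which has also been recorded there. Once this is acknowledged, the proof reduces to the two algebraic steps above and closes immediately.
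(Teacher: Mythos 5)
Your proposal is correct and is essentially the argument the paper gives: both start from $C_n^\phi=\sum_{\rho}\phi_\rho\,\mu(\rho,\hat1_n)$, insert the generalized moment--cumulant formula $\phi_\rho=\sum_{\pi\le\rho}\phi_\rho\circ C^\psi_\pi$ from Definition~\ref{def:condfree}, and collapse the resulting double sum over $\NC_n$ by M\"obius/zeta convolution. The only (cosmetic) difference is that you stop at $\phi_\rho\circ C^\psi_\pi$ and invoke the defining M\"obius sum for $C^\phi_{\hat1_n}\circ C^\psi_\pi$ directly, whereas the paper expands one level further to $C^\phi_\rho\circ C^\psi_\sigma$ and then uses $\sum_\pi\zeta(\rho,\pi)\,\mu(\pi,\hat1_n)=\delta(\rho,\hat1_n)$; these are the same computation read in opposite directions.
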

\begin{proof}
  The proof of \cite{Speed:1983:cumulantsI} can be repeated literally
  after replacing the lattice $\Pi_n$ by its sublattice $\NC_n$:
  \begin{align*}
    C_n^\phi(X_1,X_2,\dots,X_n)
    &= \sum_{\pi\in \NC_n}
        \phi_\pi(X_1,X_2,\dots,X_n)\,\mu(\pi,\hat1_n) \\
    &= \sum_{\pi\in \NC_n}
        \sum_{\sigma\leq\pi}
         \sum_{\sigma\leq\rho\leq\pi}
          C_\rho^\phi\circ C_\sigma^\psi(X_1,X_2,\dots,X_n)
          \, \mu(\pi,\hat1_n)
    \\
    &= \sum_{\pi\in \NC_n}
        \sum_{\rho\in\NC_n}
         \sum_{\sigma\in\NC_n}
          C_\rho^\phi\circ C_\sigma^\psi(X_1,X_2,\dots,X_n)
          \, \zeta(\sigma,\rho)
          \, \zeta(\rho,\pi)
          \, \mu(\pi,\hat1_n)
    \\
    &= \sum_{\rho\in\NC_n}
        \sum_{\sigma\leq\rho}
         C_\rho^\phi\circ C_\sigma^\psi(X_1,X_2,\dots,X_n)
          \, \delta(\rho,\hat1_n)
  \end{align*}
\end{proof}

\section{An application}

As an application we reprove a characterization of freeness from the recent paper
\cite{NicaShlyakhtenkoSpeicher:2002:operatorI}.
To illustrate our approach, let us first give a proof of a more or less trivial formula
from the latter paper.
\begin{proposition}[{\cite[Theorem~3.1]{NicaShlyakhtenkoSpeicher:2002:operatorI}}]
  \label{nss:ckpsi}
  Let $\alg{C}\subseteq\alg{B}\subseteq\alg{A}$ and $\psi:\alg{A}\to\alg{B}$,
  $\phi:\alg{A}\to\alg{C}$ be as before.
  If the $\psi$-valued cumulants of $X_1,X_2,\dots,X_n$ satisfy
  $$
  C_k^\psi(X_{i_1} c_1,X_{i_2} c_2,\dots,X_{i_{k-1}} c_{k-1},X_{i_k})
  \in \alg{C}
  $$
  for all choices of indices $i_1,i_2,\dots,i_k$ and elements $c_1,\dots,c_{k-1}\in\alg{C}$,
  then actually
  $$
  C_k^\psi(X_{i_1} c_1,X_{i_2} c_2,\dots,X_{i_{k-1}} c_{k-1},X_{i_k})
  = C_k^\phi(X_{i_1} c_1,X_{i_2} c_2,\dots,X_{i_{k-1}} c_{k-1},X_{i_k})
  $$
\end{proposition}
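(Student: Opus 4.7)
The plan is to proceed by induction on $k$. The base case $k=1$ is immediate: by hypothesis $\psi(X_{i_1})\in\alg{C}$, and so $C_1^\phi(X_{i_1})=\phi(X_{i_1})=\phi(\psi(X_{i_1}))=\psi(X_{i_1})=C_1^\psi(X_{i_1})$ using $\phi=\phi\circ\psi$.

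For the inductive step, set $\vec X=(X_{i_1}c_1,\ldots,X_{i_{k-1}}c_{k-1},X_{i_k})$. First I establish an auxiliary claim: under the hypothesis, $C_\sigma^\psi(\vec X)\in\alg{C}$ for every $\sigma\in\NC_k$. This follows from the recursive definition of $C_\sigma^\psi$ by peeling off an interval block $\sigma_j$: the block cumulant $C_{|\sigma_j|}^\psi$ on its arguments is of the form covered by the hypothesis after absorbing any trailing $\alg{C}$-factor into the preceding argument via the $\alg{B}$-module property, hence lies in $\alg{C}$; when reinserted as a coefficient into the next argument this $\alg{C}$-value merges with the adjacent $c$-factor (again by the module property), keeping the reduced sequence in the same hypothesis shape, so the recursion stays within $\alg{C}$.

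The main step is now a comparison of moment-cumulant expansions. M\"obius inversion in $\NC_k$ gives
$$
\psi(X_{i_1}c_1\cdots X_{i_k})=\psi_{\hat{1}_k}(\vec X)=\sum_{\sigma\in\NC_k}C_\sigma^\psi(\vec X)
\qquad\text{and}\qquad
\phi_{\hat{1}_k}(\vec X)=\sum_{\sigma\in\NC_k}C_\sigma^\phi(\vec X).
$$
Apply $\phi$ to the first identity: by $\phi=\phi\circ\psi$ the left side becomes $\phi_{\hat{1}_k}(\vec X)$, while on the right $\phi$ acts as the identity on each $C_\sigma^\psi(\vec X)\in\alg{C}$, giving $\phi_{\hat{1}_k}(\vec X)=\sum_\sigma C_\sigma^\psi(\vec X)$. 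Subtracting the two expansions yields
$$
\sum_{\sigma\in\NC_k}\bigl(C_\sigma^\phi(\vec X)-C_\sigma^\psi(\vec X)\bigr)=0.
$$
For $\sigma<\hat{1}_k$ every block of $\sigma$ has size strictly less than $k$, so the inductive hypothesis identifies $C_m^\phi$ with $C_m^\psi$ on each block; since $C_\sigma^\phi$ and $C_\sigma^\psi$ are built by the same recursive multiplicativity from these equal, $\alg{C}$-valued block cumulants, the nested expressions coincide. Hence only the $\sigma=\hat{1}_k$ term survives, and one reads off $C_k^\phi(\vec X)=C_k^\psi(\vec X)$.

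The point requiring the most care is the recursive identification $C_\sigma^\phi=C_\sigma^\psi$ for $\sigma<\hat{1}_k$: at each peeling step one must verify both that the inserted coefficient takes the same value in the $\phi$- and $\psi$-recursions (granted by the IH applied to a block of size $<k$) and that the reduced sequence remains in the scope of the hypothesis (the inserted $\alg{C}$-factor is absorbed into an adjacent $c$ via the module property), so that the IH can be invoked again at the next level of the recursion.
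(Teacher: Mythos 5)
Your proof is correct, but it takes a different route from the paper. The paper obtains the proposition as a one-step consequence of its free Brillinger formula (Theorem~\ref{thm:ConditionedCumulants}): expanding $C_n^\phi=\sum_{\pi}C_n^\phi\circ C_\pi^\psi$, noting that under the hypothesis $\phi_\sigma\circ C_\pi^\psi$ collapses to $C_\pi^\psi$ for every $\sigma\geq\pi$, so each summand carries the factor $\sum_{\sigma\geq\pi}\mu(\sigma,\hat 1_n)$, which vanishes unless $\pi=\hat 1_n$; no induction is needed, and the argument deliberately showcases the nested-cumulant formalism introduced in Definition~\ref{def:condfree}. You instead bypass nested cumulants entirely and argue by induction on $k$: the auxiliary claim that $C_\sigma^\psi$ of hypothesis-form tuples stays in $\alg{C}$ (via the $\alg{B}$-module property), the comparison of the two moment--cumulant expansions after applying $\phi$ and using $\phi=\phi\circ\psi$, and the identification $C_\sigma^\phi=C_\sigma^\psi$ for $\sigma<\hat 1_k$ block by block from the inductive hypothesis, with the module property keeping reduced arguments in the scope of the hypothesis. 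This is essentially the flavor of the original Nica--Shlyakhtenko--Speicher argument: it is self-contained, resting only on the moment--cumulant formulas and module properties, but it pays for this with the recursive bookkeeping of partitioned cumulants, whereas the paper's proof is shorter once Theorem~\ref{thm:ConditionedCumulants} is available and is a cleaner M\"obius-function manipulation; both are valid, and your careful handling of trailing and inserted $\alg{C}$-coefficients (the point you flag yourself) is exactly the place where the inductive version needs attention.
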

\begin{proof}
  By Theorem~\ref{thm:ConditionedCumulants} we can expand the
  $\phi$-cumulant in terms of the $\psi$-cumulants
  $$
    C_n^\phi(X_{i_1}c_1,X_{i_2} c_2,\dots,X_{i_{k-1}} c_{k-1},X_{i_k})
    = \sum_\pi
       C_n^\phi\circ C_\pi^\psi(X_{i_1} c_1,
                                          X_{i_2} c_2,
                                          \dots,
                                          X_{i_{k-1}} c_{k-1},
                                          X_{i_k})
  .
  $$
  Now by definition
  \begin{multline*}
    C_n^\phi\circ C_\pi^\psi(X_{i_1} c_1,
                                       X_{i_2} c_2,
                                       \dots,
                                       X_{i_{k-1}} c_{k-1},
                                       X_{i_k})\\
    = \sum_{\sigma\geq\pi}
        \phi_{\sigma}\circ C_\pi^\psi(X_{i_1} c_1,
                                       X_{i_2} c_2,
                                       \dots,
                                       X_{i_{k-1}} c_{k-1},
                                       X_{i_k})
        \, \mu(\sigma,\hat1_n)
  \end{multline*}
  and by assumption, 
  $$
  \phi_{\sigma}\circ  C_\pi^\psi(X_{i_1} c_1,
                                       X_{i_2} c_2,
                                       \dots,
                                       X_{i_{k-1}} c_{k-1},
                                       X_{i_k})
  = C_\pi^\psi(X_{i_1} c_1,
               X_{i_2} c_2,
               \dots,
               X_{i_{k-1}} c_{k-1},
               X_{i_k})
  $$
  for all $\sigma\geq \pi$
  and $\sum_{\sigma\geq\pi} \mu(\sigma,\hat1_n)=0$ unless $\pi=\hat1_n$.
  Therefore only the summand corresponding to $\pi=\hat1_n$ is nonzero.
\end{proof}

For the final application we need to recall the basic properties
of the Kreweras complement.
\begin{definition}[\cite{Kreweras:1972:partitions}]
  Given two set partitions $\pi$ and $\sigma$ of the same order $n$,
  we denote by $\pi\interw\sigma$ 
  their \emph{interweaved union},
  i.e., the partition of order $2n$ obtained by alternatingly arranging the
  points of $\pi$ and $\sigma$.

  The \emph{Kreweras complement} of a partition $\pi\in\NC_n$
  is defined as the unique  maximal partition $\sigma\in\NC_n$
  s.t.\ $\pi\interw\sigma$ is noncrossing.
\end{definition}
The Kreweras complement is in fact an anti-automorphism of $\NC_n$
which immediately implies the following proposition;
let us however give another proof here by constructing
and explicit bijection to which we will refer later.

\begin{proposition}
  \label{prop:kreweras}
  Let $\pi\in\NC_n$, then
  the intervals $[\,\pi,\hat1_n\,]$ and $[\,0,K(\pi)\,]$ 
  are antiisomorphic via the Kreweras complement. 
  \end{proposition}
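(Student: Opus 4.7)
The plan is to define $\Phi(\sigma) := K(\sigma)$ for $\sigma \in [\pi, \hat{1}_n]$, using the same interleaved arrangement $1 < \bar{1} < 2 < \bar{2} < \dots < n < \bar{n}$ that defines $K(\pi)$. I need to check three things: that $\Phi(\sigma) \leq K(\pi)$ so $\Phi$ indeed lands in $[\hat{0}_n, K(\pi)]$, that $\Phi$ reverses order, and that $\Phi$ is bijective.

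The first two points are consequences of a single monotonicity property of the Kreweras complement: if $\sigma_1 \leq \sigma_2$ in $\NC_n$, then any partition $\tau$ of the barred points for which $\sigma_2 \interw \tau$ is noncrossing automatically makes $\sigma_1 \interw \tau$ noncrossing as well, since refining one of the two components of an interleaved partition preserves noncrossingness. Taking the maximal such $\tau$ for each $\sigma_i$ yields $K(\sigma_2) \leq K(\sigma_1)$. Applying this with $\sigma_1 = \pi$ gives well-definedness of $\Phi$, and applying it to arbitrary pairs in $[\pi, \hat{1}_n]$ gives the order-reversal.

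To construct an inverse I use the dual Kreweras map $K^{-1} : \NC_n \to \NC_n$ defined via the shifted interleaving $\bar{1} < 1 < \bar{2} < 2 < \dots < \bar{n} < n$; equivalently, $K^{-1}(\tau)$ is the maximum partition $\sigma$ of $\{1,\dots,n\}$ such that $\sigma \interw \tau$ is noncrossing under the shifted arrangement. The same monotonicity argument shows $\Psi(\tau) := K^{-1}(\tau)$ maps $[\hat{0}_n, K(\pi)]$ into $[\pi, \hat{1}_n]$ order-reversingly, and the cyclic symmetry of the combined picture gives the identities $K \circ K^{-1} = \id = K^{-1} \circ K$ on $\NC_n$, so $\Phi$ and $\Psi$ are mutually inverse. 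The only point requiring care is formulating the monotonicity statement cleanly and verifying that $K^{-1}$ is a genuine two-sided inverse of $K$, both of which are routine once the interleaved picture is drawn; no substantive combinatorial obstacle arises, and the value of the explicit construction is precisely that the bijection $\sigma \mapsto K(\sigma)$ will be available for later use.
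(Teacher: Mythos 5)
Your argument is correct, but it is not the route the paper takes. You essentially prove (or invoke) that the Kreweras complement is an order-reversing bijection of $\NC_n$ with order-reversing inverse given by the shifted interleaving, and then restrict this anti-automorphism to the interval: $\sigma\mapsto K(\sigma)$ sends $[\pi,\hat1_n]$ onto $[K(\hat1_n),K(\pi)]=[\hat0_n,K(\pi)]$. The paper explicitly acknowledges this ``immediate'' proof in the sentence preceding the proposition and deliberately avoids it; instead it gives a direct combinatorial bijection at the level of covering relations: every $\sigma\geq\pi$ arises by merging blocks of $\pi$, there are only two possible relative positions of two blocks of $\pi$, and in each case the merge corresponds to joining two specific points of $K(\pi)$ (those marked $\times$ in the pictures). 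What the paper's approach buys is exactly this explicit dictionary between block-mergings in $[\pi,\hat1_n]$ and point-joinings in $[\hat0_n,K(\pi)]$, which is quoted later in the proof of Theorem~\ref{thm:CondCum:NSSthm3.6} (the definition of $\tilde b_i$ refers to the points marked $\times$); your proof establishes the anti-isomorphism but does not produce that marking, so the later reference would lose its anchor. What your approach buys is generality and economy: anti-monotonicity of $K$ plus a two-sided inverse is a clean lattice-theoretic statement, independent of any interval. Two small points to tighten: your justification ``refining one component of an interleaved partition preserves noncrossingness'' is only valid because $\sigma_1$ is itself noncrossing (a refinement of a noncrossing partition need not be noncrossing in general), so say so; and the step $\tau\leq K(\sigma_1)$ for every admissible $\tau$ uses that the unique maximal admissible partition in a finite poset is in fact the maximum, together with the (asserted, not proved) identity $K^{-1}\circ K=\id$, which you defer as routine.
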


  \begin{proof}

  Draw $\pi$ and all the points of $K(\pi)$ between the points of $\pi$.
  Every $\sigma\geq\pi$ is obtained from $\pi$
  by connecting some of its blocks.
  To every possible connection there corresponds a unique connection of two
  points of $K(\pi)$, as follows.
  There are two possible relative positions of two blocks of $\pi$:
  \begin{enumerate}
   \item 
    \begin{picture}(90,15)(1,0)
      \put(10,0){\line(0,1){8.4}}
      \put(20,0){\line(0,1){8.4}}
      \put(30,0){\line(0,1){8.4}}
      \put(43,0){$\cdots$}
      %
      \put(70,0){\line(0,1){8.4}}
      \put(80,0){\line(0,1){8.4}}
      \put(90,0){\line(0,1){8.4}}
      \put(103,0){$\cdots$}
      \put(10,8.4){\line(1,0){20}}
      \put(70,8.4){\line(1,0){20}}
      %
      %
      \put(15,0.3){\circle{0.2}}
      \put(25,0.3){\circle{0.2}}
      \put(35,0.3){\circle{0.2}}
      \put(31.85,0.7){\tiny$\times$}
      \put(65,0.3){\circle{0.2}}
      \put(75,0.3){\circle{0.2}}
      \put(85,0.3){\circle{0.2}}
      \put(95,0.3){\circle{0.2}}
      \put(91.85,0.7){\tiny$\times$}
    \end{picture}
   \item 
     \begin{picture}(160,20)(1,0)
      \put(10,0){\line(0,1){14.4}}
      \put(20,0){\line(0,1){14.4}}
      \put(30,0){\line(0,1){14.4}}
      \put(43,0){$\cdots$}
      %
      \put(70,0){\line(0,1){8.4}}
      \put(80,0){\line(0,1){8.4}}
      \put(90,0){\line(0,1){8.4}}
      \put(100,0){\line(0,1){8.4}}
      \put(113,0){$\cdots$}
      %
      \put(140,0){\line(0,1){14.4}}
      \put(150,0){\line(0,1){14.4}}
      \put(160,0){\line(0,1){14.4}}
      \put(10,14.4){\line(1,0){150}}
      \put(70,8.4){\line(1,0){30}}
      %
      %
      \put(15,0.3){\circle{0.2}}
      \put(25,0.3){\circle{0.2}}
      \put(35,0.3){\circle{0.2}}
      \put(31.85,0.7){\tiny$\times$}
      \put(65,0.3){\circle{0.2}}
      \put(75,0.3){\circle{0.2}}
      \put(85,0.3){\circle{0.2}}
      \put(95,0.3){\circle{0.2}}
      \put(105,0.3){\circle{0.2}}
      \put(101.85,0.7){\tiny$\times$}
      \put(135,0.3){\circle{0.2}}
      \put(145,0.3){\circle{0.2}}
      \put(155,0.3){\circle{0.2}}
      \put(165,0.3){\circle{0.2}}
    \end{picture}
  \end{enumerate}
  In both cases connecting the two blocks of $\pi$ corresponds to connecting
  the points marked with ``$\times$'' in the Kreweras complement.
  \end{proof}

The Kreweras naturally appears in the incidence algebra convolution product
which implements  \emph{multiplicative free convolution}
on the level of cumulants.

\begin{proposition}[\cite{NicaSpeicher:2006:lectures}]
  \label{nicaspeicher:multiplicative} 
  Let $(\alg{A},\psi)$ be a $\alg{B}$-valued probability space
  and
  let $a_1,a_2,\dots,a_n$ and $b_1,b_2,\dots,b_n$ be random variables
  free over $\alg{B}$.
  Then the cumulants of the product are
  $$
  C^\psi_n(a_1b_1, a_2b_2,\dots,a_nb_n)
  =\sum_{\pi\in\NC_n}
  C^\psi_{\pi\interw K(\pi)}(a_1,b_2,a_2,b_2,\dots,a_n,b_n)
  $$
\end{proposition}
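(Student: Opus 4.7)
The plan is to reduce the formula to an application of the partial cumulant formula (the free Leonov--Shiryaev--Krawczyk--Speicher identity) combined with the vanishing of mixed cumulants of free variables and with the structural properties of the Kreweras complement.

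First I would write the left hand side as a partial cumulant on $2n$ arguments. Let $\rho\in\NC_{2n}$ be the interval partition $\{\{1,2\},\{3,4\},\dots,\{2n-1,2n\}\}$ grouping each pair $(a_i,b_i)$. By Remark~\ref{rem:intervalpartition} applied to $\sigma=\hat1_{2n}$ (so that $\sigma/\rho=\hat1_n$),
\begin{equation*}
C^\psi_n(a_1b_1,a_2b_2,\dots,a_nb_n)
 = C^\psi_{\rho,\hat1_{2n}}(a_1,b_1,a_2,b_2,\dots,a_n,b_n).
\end{equation*}
Invoking the partial cumulant expansion (Proposition after Remark~\ref{rem:intervalpartition}) then rewrites this as
\begin{equation*}
 \sum_{\substack{\tau\in\NC_{2n}\\ \tau\vee\rho=\hat1_{2n}}}
   C^\psi_\tau(a_1,b_1,a_2,b_2,\dots,a_n,b_n).
\end{equation*}

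Next I would use freeness. Since $\{a_1,\dots,a_n\}$ and $\{b_1,\dots,b_n\}$ are free over $\alg{B}$, the mixed $\psi$-cumulants vanish. Hence only those $\tau$ survive whose blocks are either entirely contained in the odd positions $O=\{1,3,\dots,2n-1\}$ (the $a$-arguments) or entirely in the even positions $E=\{2,4,\dots,2n\}$ (the $b$-arguments). Such a $\tau$ decomposes uniquely as $\tau=\pi_a\sqcup\pi_b$ where $\pi_a$ is a partition of $O$ and $\pi_b$ of $E$. Identifying $O\cong\{1,\dots,n\}\cong E$ in the obvious way, I will view both $\pi_a$ and $\pi_b$ as elements of $\NC_n$. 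The fact that $\tau$ is itself noncrossing in $\NC_{2n}$ is equivalent to $\pi_a\interw\pi_b\in\NC_{2n}$, which by the definition of the Kreweras complement means $\pi_b\le K(\pi_a)$.

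The main obstacle is the last combinatorial step: showing that under the preceding two restrictions the condition $\tau\vee\rho=\hat1_{2n}$ is equivalent to $\pi_b=K(\pi_a)$. For this I would argue that contracting each block of $\rho$ (which glues $2i-1$ to $2i$) identifies $\tau\vee\rho$ with the join $\pi_a\vee\pi_b$ taken in $\Pi_n$ after the identification $O\cong\{1,\dots,n\}\cong E$; thus the condition reads $\pi_a\vee\pi_b=\hat1_n$. The Kreweras complement satisfies $\pi_a\vee K(\pi_a)=\hat1_n$, so $\pi_b=K(\pi_a)$ certainly works. Conversely, if $\pi_b<K(\pi_a)$ in $\NC_n$, Proposition~\ref{prop:kreweras} provides the explicit bijection $\pi_b\mapsto\sigma\in[\pi_a,\hat1_n]$ (each ``$\times$''-connection in $K(\pi_a)$ corresponds to a merging of two $\pi_a$-blocks), and dropping any such connection produces a strictly smaller $\sigma\in[\pi_a,\hat1_n]$, which is exactly $\pi_a\vee\pi_b$; hence the join stays below $\hat1_n$. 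This forces $\pi_b=K(\pi_a)$.

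Combining these three steps, the sum collapses to
\begin{equation*}
C^\psi_n(a_1b_1,\dots,a_nb_n)
 = \sum_{\pi\in\NC_n} C^\psi_{\pi\interw K(\pi)}(a_1,b_1,a_2,b_2,\dots,a_n,b_n),
\end{equation*}
which is the desired identity.
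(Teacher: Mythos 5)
Your overall strategy is sound, and it is in fact the argument of the cited source \cite{NicaSpeicher:2006:lectures} (the paper itself gives no proof of this proposition): rewrite the left-hand side as the partial cumulant $C^\psi_{\rho,\hat1_{2n}}$ via Remark~\ref{rem:intervalpartition}, expand it by the proposition $C_{\rho,\sigma}=\sum_{\tau\vee\rho=\sigma}C_\tau$, discard all $\tau$ with mixed blocks by freeness, and identify the survivors. The gap sits exactly in the identification step that you yourself call the main obstacle, and it is twofold. First, the join in the partial-cumulant proposition is the join in $\NC_{2n}$, not in $\Pi_{2n}$: ``contracting the blocks of $\rho$'' computes connectivity, i.e.\ the $\Pi$-join, and since the $\NC$-join dominates the $\Pi$-join, proving $\pi_a\vee_{\Pi_n}\pi_b<\hat1_n$ does not by itself exclude $\tau\vee_{\NC_{2n}}\rho=\hat1_{2n}$ for an unwanted $\tau=\pi_a\interw\pi_b$ with $\pi_b<K(\pi_a)$. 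What is needed (true, standard, but nowhere stated in your argument) is the lemma that the $\Pi$-join of a noncrossing partition with an \emph{interval} partition is again noncrossing, hence coincides with the $\NC$-join; only then is $\tau\vee\rho=\hat1_{2n}$ equivalent to $\pi_a\vee_{\Pi_n}\pi_b=\hat1_n$.

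Second, your converse step ``dropping any such connection produces a strictly smaller $\sigma$'' is an unproved strictness claim: $\pi_b\mapsto\pi_a\vee\pi_b$ is order preserving, but an order-preserving map can send non-maximal elements to the top, and Proposition~\ref{prop:kreweras} (an anti-isomorphism $[\pi_a,\hat1_n]\to[\hat0_n,K(\pi_a)]$ given by $\sigma\mapsto K(\sigma)$) neither says that this join map is injective nor that the element of $[\pi_a,\hat1_n]$ ``corresponding'' to $\pi_b$ is $\pi_a\vee\pi_b$; moreover a general $\pi_b\le K(\pi_a)$ is not generated by the special ``$\times$''-connections appearing in that proof. The step is repaired by a one-line block count: the blocks of $\pi_a\vee_{\Pi_n}\pi_b$ are the connected components of the bipartite graph whose vertices are the blocks of $\pi_a$ and of $\pi_b$ and whose $n$ edges are given by the elements of $\{1,\dots,n\}$, so $\pi_a\vee_{\Pi_n}\pi_b=\hat1_n$ forces $\abs{\pi_a}+\abs{\pi_b}\le n+1=\abs{\pi_a}+\abs{K(\pi_a)}$, i.e.\ $\abs{\pi_b}\le\abs{K(\pi_a)}$; together with $\pi_b\le K(\pi_a)$ this yields $\pi_b=K(\pi_a)$. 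You should also justify, rather than assert, that $\pi_a\vee_{\Pi_n}K(\pi_a)=\hat1_n$ (needed for the surviving terms to actually satisfy the join condition); this is standard and can be checked directly from the construction of $K(\pi_a)$. With these two repairs your proof is complete and coincides with the proof in \cite{NicaSpeicher:2006:lectures}.
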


With these preparations we are able to provide an alternative proof of the following theorem.

\begin{theorem}[{\cite[Theorem~3.6]{NicaShlyakhtenkoSpeicher:2002:operatorI}}]
  \label{thm:CondCum:NSSthm3.6}
  Let $\alg{C}\subseteq \alg{B}\subseteq\alg{A}$ 
  and $\psi:\alg{A}\to\alg{B}$, $\phi:\alg{A}\to\alg{C}$ as before.
  Let $\alg{C}\subseteq \alg{N}\subseteq\alg{A}$ be another subalgebra
  and
  assume in addition that $\phi:\alg{B}\to\alg{C}$ is faithful.
  Then $\alg{N}$ is free from $\alg{B}$ over $\alg{C}$ if and only if
  for all finite sequences~$X_i\in\alg{N}$ and for all~$b_i\in\alg{B}$
  the identity
  \begin{multline}
  C_n^\psi(X_1b_1,X_2b_2,\dots,X_{n-1}b_{n-1},X_n) \\
  = \phi(C_n^\psi(X_1\phi(b_1),X_2\phi(b_2),\dots,X_{n-1}\phi(b_{n-1}),X_n))
  \end{multline}
  holds. 
  By Proposition~\ref{nss:ckpsi}
  this is equivalent to the statement that
  for all finite sequences~$X_i\in\alg{N}$ and for all~$b_i\in\alg{B}$
  we have
  \begin{multline}
  C_n^\psi(X_1b_1,X_2b_2,\dots,X_{n-1}b_{n-1},X_n)
  \\
  = C_n^\phi(X_1\phi(b_1),X_2\phi(b_2),\dots,X_{n-1}\phi(b_{n-1}),X_n)
  .
  \end{multline}
\end{theorem}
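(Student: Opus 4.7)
The plan is to bridge the $\psi$- and $\phi$-cumulant calculi via Theorem~\ref{thm:ConditionedCumulants} and Proposition~\ref{nss:ckpsi}, using faithfulness of $\phi|_{\alg{B}}$ to pass from expectation identities to algebraic ones. A preliminary fact common to both directions is $\psi|_{\alg{N}}=\phi|_{\alg{N}}$, i.e., $\psi(X)=\phi(X)\in\alg{C}$ for every $X\in\alg{N}$. For $(\Leftarrow)$ this is the $n=1$ instance of the identity. For $(\Rightarrow)$, freeness applied to the centered length-two product gives $\phi\bigl((X-\phi(X))(b-\phi(b))\bigr)=0$; unfolding with $\phi=\phi\circ\psi$ and the $\alg{B}$-module property of $\psi$ reduces this to $\phi\bigl((\psi(X)-\phi(X))\,b\bigr)=0$ for every $b\in\alg{B}$, and faithfulness of $\phi|_{\alg{B}}$ delivers $\psi(X)=\phi(X)$.

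For the forward direction, I would boost the preliminary, by induction on $n$, to the stronger claim that $C_n^\psi(X_1 b_1,\ldots,X_{n-1}b_{n-1},X_n)\in\alg{C}$ for all $X_i\in\alg{N}$, $b_i\in\alg{B}$ (the essential NSS characterization). The inductive step applies Theorem~\ref{thm:ConditionedCumulants} to $C_n^\phi(X_1b_1,\ldots,X_n)$: for every $\sigma\ne\hat{1}_n$ the block-cumulants $C_{\sigma_j}^\psi$ reduce to $\alg{C}$-valued quantities via the inductive hypothesis (after pulling a trailing $b$ out by the $\alg{B}$-module property when necessary), so the outer $C_{\hat{1}_n}^\phi$ on nested $\alg{C}$-valued inputs with $\ge 2$ blocks vanishes because $\phi$ is the identity on $\alg{C}$ and ordinary free cumulants of $\alg{C}$-valued arguments of length $\ge 2$ are zero. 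Only $\sigma=\hat{1}_n$ survives, giving $C_n^\phi=\phi\circ C_n^\psi$; a further M\"obius inversion plus faithfulness then promotes this to $C_n^\psi\in\alg{C}$. With the strengthened claim in hand, Proposition~\ref{nss:ckpsi} rewrites the LHS of the identity as $C_n^\phi(X_1b_1,\ldots,X_n)$ and the inner $\psi$-cumulant on the RHS as $C_n^\phi(X_1\phi(b_1),\ldots,X_n)$, and the residual identity $C_n^\phi(X_1b_1,\ldots,X_n)=C_n^\phi(X_1\phi(b_1),\ldots,X_n)$ follows by the centering $b_i=\phi(b_i)+b_i^\circ$, expansion via the partial-cumulants formula of Section~\ref{sec:nested}, and the vanishing of mixed $\phi$-cumulants of $\alg{N}$ and $\alg{B}$ guaranteed by freeness.

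For the backward direction, the identity itself forces $C_n^\psi(X_1b_1,\ldots,X_n)\in\alg{C}$ (as the $\phi$-image of a quantity), so Proposition~\ref{nss:ckpsi} converts the identity into the purely $\phi$-level statement $C_n^\phi(X_1b_1,\ldots,X_n)=C_n^\phi(X_1\phi(b_1),\ldots,X_n)$. To conclude freeness, I would show that every mixed $\phi$-cumulant of alternating $\alg{N},\alg{B}$-elements vanishes: any such sequence may be regrouped into the products form permitted by the identity, and the discrepancy $b_i\mapsto b_i-\phi(b_i)$, fed through the partial-cumulants formula, yields cancellations that close by induction on the cumulant length.

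The main obstacle is the inductive bootstrap in the forward direction: the iterated use of Theorem~\ref{thm:ConditionedCumulants} together with the delicate observation that outer $C^\phi$-cumulants of length $\ge 2$ on $\alg{C}$-valued nested inputs vanish, plus the Krawczyk-Speicher-style bookkeeping required to absorb the centered contributions $b_i^\circ$ in the final reduction.
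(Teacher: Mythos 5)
There is a genuine gap, and it sits exactly at the step you yourself flag as the ``main obstacle'': the claim that for $\sigma\neq\hat1_n$ the nested cumulants $C_n^\phi\circ C_\sigma^\psi(X_1b_1,\dots,X_{n-1}b_{n-1},X_n)$ all vanish, so that ``only $\sigma=\hat1_n$ survives''. This is false, and the reason is precisely the trailing $b$'s you propose to pull out by the module property: an inner block cumulant has the form $c\,b_l$ with $c\in\alg{C}$ but $b_l\in\alg{B}$, so $\phi_\rho\circ C_\sigma^\psi$ is \emph{not} independent of $\rho$ and the M\"obius sum does not collapse. Concretely, take $n=3$, $\sigma=\{\{1,3\},\{2\}\}$, arguments $Xb,Xb,X$ with $\alg{C}=\IC$, $X\in\alg{N}$, $b\in\alg{B}$ free, $\phi(X)=x$, $\phi(b)=0$, $\phi(b^2)=v$. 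Then $C_3^\phi\circ C_\sigma^\psi(Xb,Xb,X)=\phi\bigl(C_2^\psi(Xb,\psi(Xb)X)\bigr)-\phi\bigl(C_2^\psi(Xb,\phi(Xb)X)\bigr)=x\,\phi\bigl(C_2^\psi(Xb,bX)\bigr)=x\,v\,(\phi(X^2)-x^2)\neq0$. In general the surviving terms are exactly those $\sigma\neq\hat1_n$ in which $1$ and $n$ lie in the same block (so that the last $b$-position is a singleton of $K(\sigma)$); this is visible in the paper's identity \eqref{eq:CphioCpipsi=CpiuKpiphi}, $C_n^\phi\circ C_\pi^\psi(X_1b_1,\dots,X_nb_n)=C^\phi_{\pi\interw K(\pi)}(X_1,b_1,\dots,X_n,b_n)$. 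The paper's converse proof does not kill these terms; it matches them, by induction and the Kreweras anti-isomorphism of Proposition~\ref{prop:kreweras}, against the corresponding terms of the Nica--Speicher product formula (Proposition~\ref{nicaspeicher:multiplicative}) applied to $C_n^\phi(X_1b_1,\dots,X_nb_n)$, and the desired identity falls out from comparing the two top terms. Your subsequent steps (Proposition~\ref{nss:ckpsi}, the centering $b_i=\phi(b_i)+b_i^\circ$) therefore rest on a false intermediate identity $C_n^\phi=\phi\circ C_n^\psi$.

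Even if the vanishing were true, the promotion ``a further M\"obius inversion plus faithfulness then gives $C_n^\psi\in\alg{C}$'' does not work as stated: from the single equation $C_n^\phi(X_1b_1,\dots,X_n)=\phi\bigl(C_n^\psi(X_1b_1,\dots,X_n)\bigr)$ (an identity between two fixed elements of $\alg{C}$) faithfulness gives nothing about the $\alg{B}$-valued element $C_n^\psi(X_1b_1,\dots,X_n)$. One must test against an arbitrary trailing $b_n$, i.e.\ prove $\phi\bigl(C_n^\psi(X_1b_1,\dots,X_n)b_n\bigr)=\phi\bigl(C_n^\psi(X_1\phi(b_1),\dots,X_n)b_n\bigr)$ for all $b_n\in\alg{B}$ --- this is exactly how the paper invokes faithfulness --- and with a trailing $b_n$ even more of the lower-order nested cumulants are nonzero (already $C_2^\phi\circ C_{\hat0_2}^\psi(X_1b_1,X_2b_2)=\phi(X_1)\,C_2^\phi(b_1\phi(X_2),b_2)$ in general), so a pure vanishing argument cannot close the induction. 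Your backward direction (identity $\Rightarrow$ freeness) is also vaguer than necessary --- the paper simply expands $\phi(b_0X_1b_1\dotsm X_nb_n)$ into $\psi$-cumulants over $\NC_n$ and kills every term through an interval block using the assumed identity with centered elements, with no appeal to a mixed-cumulant characterization of amalgamated freeness --- but that direction of your sketch is at least repairable; the forward direction as proposed is not.
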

\begin{proof}
  Assume that the factorization formula holds.
  Let $X_1,X_2,\dots,X_n\in\alg{N}$,
  $b_0,b_1,\dots,b_{n}\in \alg{B}$ s.t.\ $\phi(X_i)=0$ and $\phi(b_i)=0$
  (or $b_0=1$ or $b_n=1$ is also allowed).
  We must show that 
  $\phi(b_0 X_1 b_1 X_2 \dotsm X_n b_n)=0$.
  To this end we expand the expectation into $\psi$-cumulants
  \begin{align*}
  \phi(b_0 X_1 b_1\dotsm X_n b_n) 
  &=
  \phi(\psi(b_0 X_1 b_1\dotsm X_n b_n))
  \\
  &= \sum_{\pi\in\NC_n}
     \phi(C_\pi^\psi(b_0X_1b_1,X_2b_2,\dots, X_n b_n))
  \end{align*}
  and
  $C_\pi^\psi(b_0X_1b_1,X_2b_2,\dots, X_n b_n))=0$ for each~$\pi$
  because each $\pi$ has a block which is an interval say of length~$m$
  starting at some~$k$
  and the corresponding cumulant contributes the factor
  $$
  C_m^\psi(X_k b_k, X_{k+1} b_{k+1},\dots X_l)
  = \phi(C_m^\psi(X_k\phi(b_k), X_{k+1} \phi(b_{k+1}),\dots X_l))
  $$
  which vanishes:
  if $m\geq2$ then there is a factor $\phi(b_k)=0$ and if $m=1$,
  then the term is simply $C_1^\psi(X_k)=\phi(C_1^\psi(X_k))=\phi(X_k)=0$.
  Note that we did not need faithfulness of~$\phi$ for this implication.

  For the converse we could use the same argument as in
  \cite{NicaShlyakhtenkoSpeicher:2002:operatorI}, in which a reference algebra~$\alg{N'}$
  is constructed which is also free from~$\alg{B}$ over~$\alg{C}$ and which
  satisfies the cumulant factorization condition and has the same distribution as~ $\alg{N}$.
  It then follows that~$\alg{N}$ satisfies the cumulant factorization condition as well.

  Alternatively, here is a sketch of a direct proof
  using conditioned cumulants.
  By faithfulness it suffices to prove that
  for all finite sequences of random variables~$X_i\in\alg{N}$
  and~$b_i\in\alg{B}$ we have the identity
  \begin{multline*}
  \phi( C_n^\psi(X_1 b_1,
                 X_2 b_2,
                 \dots,
                 X_{n-1} b_{n-1},
                 X_n)b_n) \\
  = \phi(C_n^\psi(X_1 \phi(b_1),
                 X_2 \phi(b_2),
                 \dots,
                 X_{n-1} \phi(b_{n-1}),
                 X_n) b_n)
  \end{multline*}
  and moreover this is equal to
  $$
  C_n^\phi(X_1 \phi(b_1),
           X_2 \phi(b_2),
           \dots,
           X_{n-1} \phi(b_{n-1}),
           X_n \phi(b_n))
  .
  $$
  We proceed by induction and compare the following two formulae
  for 
  $C^\phi_n(X_1 b_1,
             X_2 b_2,
             \dots,
             X_n b_n)
             .
  $
  On the one hand, by freeness we may apply the formula for multiplicative
  convolution from Proposition~\ref{nicaspeicher:multiplicative}
  \begin{align*}
    C_n^\phi&(X_1 b_1,
             X_2 b_2,
             \dots,
             X_n b_n) \\
    &= \sum_{\pi\in\NC_n}
        C_{\pi\interw K(\pi)}^\phi(X_1,b_1,
                                X_2, b_2,
                                \dots,
                                X_n, b_n) \\
    &= \underbrace{C_{\hat1_n\interw\hat0_n}^\phi(X_1,b_1,
                                   X_2, b_2,
                                   \dots,
                                   X_n, b_n)}_{\displaystyle%
                   C_n^\phi(X_1 \phi(b_1),\dots,X_n\phi(b_n))}
       +
       \sum_{\pi<\hat1_n}
        C_{\pi\interw K(\pi)}^\phi(X_1,b_1,
                                X_2, b_2,
                                \dots,
                                X_n, b_n) \\
\intertext{and on the other hand, using Brillinger's formula from
  Theorem~\ref{thm:ConditionedCumulants} we have}
    C_n^\phi&(X_1 b_1,
             X_2 b_2,
             \dots,
             X_n b_n) \\
    &= \sum_{\pi\in\NC_n}
        C_n^\phi\circ C_\pi^\psi(X_1 b_1,
                                X_2 b_2,
                                \dots,
                                X_n b_n) \\
    &= \phi( C_n^\psi(X_1 b_1, X_2 b_2, \dots, X_n b_n))
       +
       \sum_{\pi<\hat1_n}
        C_n^\phi\circ C_\pi^\psi(X_1 b_1,
                                X_2 b_2,
                                \dots,
                                X_n b_n) 
  \end{align*}
  Comparing the two expressions, it suffices to 
  prove inductively for $\pi<\hat1_n$ the identity
  \begin{equation}
    \label{eq:CphioCpipsi=CpiuKpiphi}
    C_n^\phi\circ C_\pi^\psi(X_1 b_1,
                             X_2 b_2,
                             \dots,
                             X_n b_n)
    = C_{\pi\interw K(\pi)}^\phi(X_1,b_1,
                              X_2, b_2,
                              \dots,
                              X_n, b_n)
   .
  \end{equation}
  Indeed,
  \begin{align*}
  C^\phi_n\circ C_\pi^\psi(X_1 b_1,
                             X_2 b_2,
                             \dots,
                             X_n b_n)
  &=\sum_{\rho\geq\pi} \phi_\rho\circ C^\psi_\pi(X_1 b_1,
                             X_2 b_2,
                             \dots,
                             X_n b_n)
                             \,
                             \mu(\rho,\hat{1}_n)
 \\
\intertext{
  and some $b_i$'s are replaced by $\phi(b_i)$,
  namley those, which are inside a block of $\pi$,
  which means, that they are singletons in $K(\pi)$.
  By induction hypothesis we obtain}
   &=
   \sum_{\rho\geq\pi} \phi_\rho\circ C^\phi_\pi(X_1 \tilde{b}_1,
                             X_2 \tilde{b}_2,
                             \dots,
                             X_n \tilde{b}_n)
                             \,
                             \mu(\rho,\hat{1}_n)
\end{align*}
where
$$
\tilde{b}_i =
\begin{cases}
  \phi(b_i) & \text{if $i$ is a singleton of $K(\pi)$}\\
  b_i       &\text{otherwise}
\end{cases}
$$
``otherwise'' meaning that $i$ is right next to an end point
    of a block of $\pi$, i.e., it is marked with  `$\times$'
    in the proof of Proposition~\ref{prop:kreweras}.
It is now easy to see that this is equal to
\begin{multline*}
    C^\phi_{\pi\interw K(\pi)}(X_1, b_1,
                       X_2, b_2,
                       \dots,
                       X_n, b_n)\\
    = \sum_{\sigma\leq K(\pi)}
      C^\phi_\pi\interw \phi_\sigma(X_1,b_1,
                                    X_2, b_2,
                                    \dots,
                                    X_n, b_n)
        \, \mu(\sigma,K(\pi))
    .
\end{multline*}
where $C^\phi_\pi\interw \phi_\sigma$ denotes the interweaved
product of the cumulant $C^\phi_\pi$ with the partitioned expectation
$\phi_\sigma$.
\end{proof}

\emph{Acknowledgements.}
We are grateful to Roland Speicher for a simplification in the first part
of the proof of Theorem~\ref{thm:CondCum:NSSthm3.6}.


\providecommand{\bysame}{\leavevmode\hbox to3em{\hrulefill}\thinspace}
\providecommand{\MR}{\relax\ifhmode\unskip\space\fi MR }
\providecommand{\MRhref}[2]{%
  \href{http://www.ams.org/mathscinet-getitem?mr=#1}{#2}
}
\providecommand{\href}[2]{#2}

\end{document}